\documentclass[a4paper,fleqn]{cas-dc}

\usepackage[numbers]{natbib}
\usepackage{xcolor}
\usepackage{tikz}
\usetikzlibrary{arrows.meta}
 \usepackage{import}
\def\tsc#1{\csdef{#1}{\textsc{\lowercase{#1}}\xspace}}
\tsc{WGM}
\tsc{QE}
\tsc{EP}
\tsc{PMS}
\tsc{BEC}
\tsc{DE}

\newtheorem{theorem}{Theorem}
\newdefinition{assumption}{Assumption}
\newtheorem{corollary}{Corollary}
\newdefinition{remark}{Remark}
\newproof{proof}{Proof}
\newproof{pot}{Proof of Theorem \ref{thm}}

\begin{document}
\let\WriteBookmarks\relax
\def\floatpagepagefraction{1}
\def\textpagefraction{.001}
\shorttitle{Adaptive Average Current-Mode Control for Plasma Arc Loads}
\shortauthors{L. Ecker et~al.}

\title [mode = title]{Adaptive Average Current-Mode Control of a Buck Converter Supplying Plasma Arc Loads}
\tnotemark[1]

\tnotetext[1]{This document presents results from the LIGHTBOW project, which was funded by the Austrian Research Promotion Agency (FFG) as part of the Expedition Zukunft Wissenschaft program.}


\author[1,2]{Lukas Ecker}[
                        ]
\cormark[1]
\ead{lukas.ecker@jku.at}
\ead[url]{https://www.jku.at/en/institute-of-control-systems/}

\credit{Conceptualization, Methodology, Software, Formal analysis, 
Validation, Visualization, Writing -- original draft}

\affiliation[1]{organization={Johannes Kepler University Linz},
                addressline={Altenbergerstrasse 69}, 
                city={Linz},
                postcode={4040}, 
                state={Upper Austria},
                country={Austria}}

\author[1,3]{Thomas Voglhuber-Brunnmaier}[]
\ead{thomas.voglhuber-brunnmaier@jku.at}
\ead[URL]{https://www.jku.at/en/institute-for-microelectronics-and-microsensors/}

\credit{Conceptualization, Methodology,
Writing -- review and editing}



\affiliation[2]{organization={Institute of Control Systems}}
\affiliation[3]{organization={Institute for Microelectronics and Microsensors}}



\cortext[cor1]{Corresponding author}

\begin{abstract}
Reliable operation of plasma processes requires a power supply that can
regulate and limit the arc current despite rapid changes of the electrical
load. Gas-flow fluctuations, arc motion, changing attachment conditions, and
electrode interactions can cause abrupt arc-voltage variations and may lead
to arc interruption. The power supply must therefore provide fast current
control and a suitable actuation layer for supervisory stabilization. This
paper presents an adaptive average current-mode controller for a buck converter
supplying a nonlinear plasma arc load. Based on an averaged model of the
converter inductor dynamics, the controller combines current-error feedback
with feedforward compensation of the measured arc voltage. Online adaptation of the inductance and its effective series resistance compensates for converter-parameter uncertainty without requiring an online nonlinear arc model. A
Lyapunov analysis establishes bounded estimation errors and asymptotic current
tracking for the averaged closed-loop system under bounded-reference
assumptions. Experiments with an argon plasma arc demonstrate contact-opening
ignition and current-reference tracking under varying arc-voltage
conditions, supporting the controller as a fast current-control layer for
plasma applications.

\end{abstract}

\begin{graphicalabstract}
\includegraphics{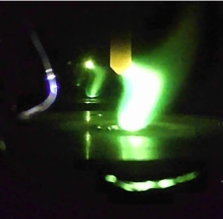}
\end{graphicalabstract}

\begin{highlights}
\item Adaptive current control of a buck converter supplying a plasma arc
\item Measured arc-voltage feedforward avoids an online nonlinear arc model
\item Online adaptation compensates uncertainty in converter inductance and series resistance
\item Experimental validation includes contact-opening ignition and current tracking
\end{highlights}

\begin{keywords}
Plasma arc power supply \sep Buck converter \sep Adaptive control \sep Current-mode control \sep Contact-opening ignition
\end{keywords}

\maketitle

\section{Introduction}
\label{sec:intro}

Plasma arc power supplies typically have to operate under rapid,
nonlinear, and partly unpredictable changes in the electrical load
while respecting strict current and voltage limits of the converter,
electrodes, and the overall process
\cite{Gerdeman1972,Hassan2025}.
These requirements are particularly demanding during contact-opening
ignition, when the load changes abruptly from a low-resistance contact
to a nonlinear plasma discharge and the arc voltage rises rapidly.

During established arc operation, thermal effects, gas-flow
fluctuations, turbulence, changing attachment conditions, electrode
condition, and external electromagnetic influences can further perturb
the electrical operating point. This is particularly relevant in
hydrogen plasma smelting reduction (HPSR), where a hydrogen-containing
plasma arc provides both the thermal energy for melting and reactive
hydrogen species for the reduction of iron oxides. Such
process-dependent variations can significantly affect arc voltage and
stability \cite{Ernst2023}. A current controller for such a
load must therefore provide fast tracking and disturbance rejection
without relying on fixed plasma parameters.

Line-commutated thyristor rectifiers remain an established power-supply
technology in metallurgical high-current applications such as DC arc
furnaces and plasma systems \cite{Solanki2015}, and are also used in
existing HPSR research facilities \cite{Ernst2023}. Their control
action is, however, inherently linked to the mains frequency, which
limits the achievable electrical actuation bandwidth. This limitation
becomes particularly relevant for HPSR, where rapid and pronounced
arc-voltage fluctuations can challenge stable plasma operation
\cite{AlNasser2026}. High-frequency switched-mode converter stages
therefore provide an attractive alternative for implementing a faster
current-control layer.

The performance of model-based converter control also depends on the
accuracy of the electrical parameters used in the control law. In
particular, the effective inductance and series resistance may deviate
from their nominal values due to component tolerances, temperature
variations, magnetic operating conditions, and parasitic effects.
Adaptive and robust control approaches for DC--DC converters have
therefore been investigated to account for parameter uncertainty and
changing operating conditions
\cite{Nizami2021,Saadat2022,ElFadil2009}.

For plasma arc loads, the converter-side parameter uncertainty is
combined with a strongly nonlinear and time-varying load. Explicitly
incorporating the plasma characteristics into the real-time current
controller would require a sufficiently accurate arc model and
knowledge or online identification of its operating-condition-dependent
parameters. This motivates a control structure that separates the
uncertain converter dynamics from the nonlinear plasma-load behavior.

This work addresses the current control of a buck-converter-based
plasma arc power supply. The proposed adaptive controller combines
feedback of the inductor-current tracking error with feedforward
compensation of the measured arc voltage. By using the measured
$U_{\mathrm{arc}}$ directly, the nonlinear plasma load is treated as
a measurable load-dependent disturbance. The uncertain inductance and effective series resistance are adapted online within the control law.
Consequently, the implemented control law does not require real-time
evaluation of the nonlinear voltage--current characteristic of the arc.
A supervisory layer supplies a bounded and rate-limited current
reference and coordinates ignition and protection functions. The main contributions of this work are:
\begin{itemize}
\item[(i)] an implementation-oriented adaptive current-control architecture
for a buck-converter-based plasma power supply that directly compensates
the measured arc voltage without requiring an online nonlinear plasma
model;

\item[(ii)] online compensation of uncertainty in the converter inductance
and effective series resistance, together with a Lyapunov-based analysis
of the resulting tracking and parameter-error dynamics;

\item[(iii)] experimental validation on a laboratory plasma-arc setup during contact-opening ignition, current-reference variations, and rapid arc-voltage disturbances.
\end{itemize}
\section{System description and averaged model}
\label{sec:model}

This section describes the plasma arc load and the buck-converter power stage, derives the averaged inductor model used for controller design, and introduces the implemented current-control architecture.

\subsection{Electric arc model}
\label{subsec:arc_circuit}

The electric arc, which occurs in various industrial processes---including
hydrogen plasma smelting reduction (HPSR), electric arc furnaces (EAFs),
plasma arc welding (PAW), plasma cutting, and plasma spraying---is a highly
nonlinear thermo-electrical plasma phenomenon.

Its detailed physical description involves ionization dynamics,
temperature-dependent conductivity, magnetohydrodynamic effects,
gas flow, and electrode interactions \cite{Beilis2020,Haraldsson2025}.
High-fidelity descriptions therefore typically require coupled
multiphysics models involving electromagnetic, fluid-dynamic, and
thermal phenomena and are generally too complex for the
implementation-oriented current-control design considered in this work.

For physical interpretation and offline simulation, the arc is therefore represented here by a reduced electrical model based on the static voltage--current (VI) characteristic proposed by \citet{andrea2015arc}. Dynamic plasma phenomena and effects not captured
by this quasi-static representation are treated as unmodeled load
variations or external disturbances.

\paragraph{Electrode gap and arc length.}
In the configuration considered in this work, the electric arc is formed
between a vertically movable electrode and a grounded metallic contact
plate, as illustrated in Fig.~\ref{fig:electrode_arc_setting}. The geometric
electrode gap $g(t)$ is obtained directly from the measured position of
the electrode axis, whereas the actual arc length
$\ell_{\mathrm{arc}}(t)$ corresponds to the path length of the plasma column between its attachment points. Due to electromagnetic forces, buoyancy, gas flow, and other
process-dependent effects, the arc is generally curved
\cite{Beilis2020,Haraldsson2025,VoglhuberBrunnmaier2026}, such that
\(
\ell_{\mathrm{arc}}(t) \geq g(t).
\)

In the laboratory setup, $\ell_{\mathrm{arc}}(t)$ can additionally be
estimated from synchronized camera images. However, this optical
information is deliberately not used for current control, since reliable
online reconstruction of the arc trajectory is often difficult in
industrial plasma processes due to limited optical access and harsh
environmental conditions. The proposed controller therefore does not
require $\ell_{\mathrm{arc}}(t)$ as an online measurement.

\begin{figure}[t]
\centering
\includegraphics[width=\columnwidth]{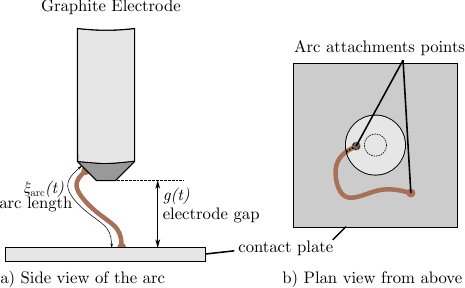}
\caption{Schematic representation of the electrode--plate geometry.
(a)~Side view illustrating the geometric electrode gap $g(t)$ and the
arc length $\ell_{\mathrm{arc}}(t)$ along the curved plasma column.
(b)~Top view showing the projected arc trajectory between the attachment
points.}
\label{fig:electrode_arc_setting} 
\end{figure}

\paragraph{Static arc characteristic.}

From a macroscopic viewpoint, the electrical behavior of the considered
process can be characterized by three distinct states: a low-resistance
conductive state, a high-resistance non-arcing state, and an arc-dominated
state following ignition. Following \citet{andrea2015arc}, the non-arcing
and arc-dominated states can be represented by a static discharge
characteristic that combines a high-resistance branch with a nonlinear
arc characteristic.

In the experiments considered later, arc ignition is initiated by contact
opening. While the electrodes are in contact, the current flows through a
low-resistance metallic conduction path, which may be approximated as
\begin{equation}
U_{\mathrm{arc}}
\approx
R_{\mathrm{contact}} I_{\mathrm{arc}},
\label{eq:contact_branch}
\end{equation}
where $R_{\mathrm{contact}}$ denotes the effective resistance of the closed
electrical contact. As the electrode is retracted and the gap $g(t)$
increases, the metallic conduction path is interrupted and the voltage
across the electrode gap rises.

Following \citet{andrea2015arc}, the non-arcing state is represented by a
high-resistance branch,
\begin{equation}
U_{\mathrm{arc}}
\approx
R_c I_{\mathrm{arc}},
\label{eq:nonarcing_branch}
\end{equation}
where $R_c$ denotes an effective resistance associated with the non-arcing
state and is substantially larger than the metallic contact resistance,
i.e.,
\begin{equation}
R_{\mathrm{contact}} \ll R_c.
\label{eq:contact_resistance_relation}
\end{equation}
Thus, $R_c$ should not be interpreted as the resistance of the closed
electrical contact. As the voltage across the electrode gap increases and
reaches the corresponding disruption voltage, the discharge transitions
to the arc-dominated state.

For an established arc, the static arc voltage is approximated following
\citet{andrea2015arc} as
\begin{equation}
U_{\mathrm{arc}}
=
\frac{\alpha}
{\arctan\!\bigl(\beta I_{\mathrm{arc}}\bigr)},
\label{eq:andrea_static}
\end{equation}
where $I_{\mathrm{arc}}$ denotes the arc current. In general, the
parameters $\alpha>0$ and $\beta>0$ depend on the arc length, gas pressure,
gas composition, electrode material, and further operating conditions.

In the present work, the discharge is investigated in an argon atmosphere
at nominally constant chamber pressure, using a graphite electrode and a
steel contact plate. Accordingly, only the explicit dependence on
$\ell_{\mathrm{arc}}$ is retained in the following, while the influence of
the remaining operating conditions is absorbed into coefficients identified
for this particular setup.\footnote{Classical empirical arc models include
power-law resistance relations such as the Warrington formula
\citep{warrington1931reactance,warrington1968protective}. A more recent
alternative is the static voltage--current representation proposed by
\citet{andrea2015arc}, which is adopted here for the present analysis.}

Following \citet{andrea2015arc}, the dependence on arc length is modeled as
\begin{subequations}
\begin{align}
\alpha(\ell_{\mathrm{arc}})
&=
\gamma\,\ell_{\mathrm{arc}}+\delta,
\label{eq:alpha_length}
\\
\beta(\ell_{\mathrm{arc}})
&=
\beta_0+\eta\,\ell_{\mathrm{arc}},
\label{eq:beta_length}
\end{align}
\end{subequations}
with $\gamma>0$, $\delta>0$, $\beta_0>0$, and $\eta\geq0$. For the
investigated argon setup, the experimental VI characteristics were well
approximated using
\(
\gamma = 2\,\mathrm{V\,mm^{-1}},\,
\delta = 15\,\mathrm{V},\,
\beta_0 = 0.18\,\mathrm{A^{-1}},
\)
and
\(
\eta = 0.004\,\mathrm{A^{-1}mm^{-1}}.
\)
\footnote{These parameter values are representative of the investigated
setup. Noticeable run-to-run variations were observed due to thermally
induced changes, electrode surface conditions, varying arc attachment,
and other process-dependent effects.}

Increasing $\ell_{\mathrm{arc}}$ shifts the VI characteristic toward higher arc voltages, predominantly through the increase in \(\alpha\), while the variation of \(\beta\) modifies its curvature. To obtain a continuous
static representation covering both the high-resistance non-arcing state
and established arc operation, the two branches are combined following
\citet{andrea2015arc} as
\begin{equation}
U_{\mathrm{arc}}
=
\frac{
\alpha(\ell_{\mathrm{arc}})\,R_c\,I_{\mathrm{arc}}
}{
\arctan\!\bigl(\beta(\ell_{\mathrm{arc}})\,I_{\mathrm{arc}}\bigr)
I_{\mathrm{arc}}R_c
+
\alpha(\ell_{\mathrm{arc}})
}.
\label{eq:andrea_discharge}
\end{equation}

For small $|I_{\mathrm{arc}}|$, \eqref{eq:andrea_discharge} approaches the
high-resistance branch \eqref{eq:nonarcing_branch}. For sufficiently large
arc currents, the arc resistance becomes small compared with $R_c$, such
that the influence of the high-resistance branch becomes negligible and
the model approaches the pure-arc relation \eqref{eq:andrea_static}.

For established arc operation, an effective static arc resistance can be
defined as
\begin{equation}
R_{\mathrm{arc}}(I_{\mathrm{arc}},\ell_{\mathrm{arc}})
=
\frac{U_{\mathrm{arc}}}{I_{\mathrm{arc}}}
=
\frac{
\alpha(\ell_{\mathrm{arc}})
}{
\arctan\!\bigl(\beta(\ell_{\mathrm{arc}})\,I_{\mathrm{arc}}\bigr)
I_{\mathrm{arc}}
}.
\label{eq:andrea_Rarc}
\end{equation}

After ignition, continued electrode retraction generally increases the arc
length and therefore shifts the static VI characteristic through
\eqref{eq:alpha_length}--\eqref{eq:beta_length}. Figure~\ref{fig:arc_vi}
illustrates this effect for representative values of
$\ell_{\mathrm{arc}}$.

\begin{figure}[htbp]
\centering
\includegraphics[width=\columnwidth]
{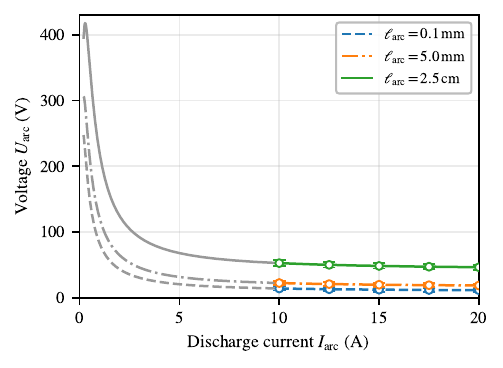}
\caption{Combined static discharge characteristic
\eqref{eq:andrea_discharge} for the argon test setup at different
established arc lengths during electrode retraction. Increasing
$\ell_{\mathrm{arc}}$ shifts the VI characteristic toward higher arc
voltages. Curve segments below $10\,\mathrm{A}$ are shown in gray because
this current range is not traversed during the investigated contact-opening
sequence.}
\label{fig:arc_vi}
\end{figure}

\paragraph{Implications for controller design.}

Contact-opening ignition represents the dominant load transient considered
in the proposed control architecture. The discharge changes from an
approximately resistive contact state to a strongly nonlinear plasma load,
while the arc voltage $U_{\mathrm{arc}}$ can rise rapidly and the effective
arc resistance $R_{\mathrm{arc}}$ may vary substantially.

For current control, these load variations are not represented by an
online plasma model. Instead, the measured arc voltage is treated as a
directly available load-dependent disturbance and compensated through
feedforward, while the adaptive controller regulates the inductor current
$I_L$. The quasi-static relations
\eqref{eq:andrea_discharge}--\eqref{eq:andrea_Rarc} are therefore used only
for physical interpretation, offline simulation, and setpoint planning.

\subsection{Power converter stage and measurements}
\label{subsec:power_stage}

Power supplies for plasma arc loads must accurately regulate the discharge
current while remaining responsive to rapid and nonlinear load variations.
During contact-opening ignition, the current must be limited to protect the
power-electronic components and electrodes. During established arc
operation, fast electrical actuation is required to maintain a stable operating
point and to reduce the risk of arc extinction under pronounced load-voltage
variations.

\paragraph{Conventional arc power supplies.}

In industrial high-power applications, arc power supplies are commonly
implemented using line-commutated thyristor rectifiers with phase-angle
control \cite{Solanki2015}. These converters provide robust and mature
solutions at very high power levels, but their control action is inherently
linked to the mains frequency. Consequently, the rate at which the converter
output voltage and arc current can be adjusted is limited. This restricted
actuation bandwidth can become critical during ignition and under rapid
arc-voltage variations, particularly in operating points where arc stability
is sensitive to changing process conditions \cite{Zarl2020}.

\paragraph{DC step-down converter.}

PWM-controlled buck converters supplied by a stiff DC link provide a suitable
alternative when faster electrical actuation is required. Compared with
line-commutated thyristor rectifiers, their control action is not directly
constrained by the mains period, allowing a substantially higher achievable
current-control bandwidth and a faster response to load variations
\cite{Solanki2015}.
For higher power levels, several buck-converter phases can in principle be
connected in an interleaved configuration. Such an architecture distributes
the total current among multiple semiconductor switches and inductors and can
reduce the output-current ripple while preserving the fast local dynamics of
the individual phases \cite{Zhang2025Interleaved}. A multiphase realization,
however, introduces additional requirements regarding current sharing,
synchronization, and phase-to-phase parameter variations
\cite{Li2011CurrentSharing,Sinafar2023}.
The present work considers a single buck-converter stage as the fundamental
building block of such an architecture. The interleaved extension itself is
outside the scope of this study, whereas the proposed control and adaptation
concepts are formulated at the phase level and may therefore provide a basis
for a future multiphase implementation.

\paragraph{Buck topology and measurements.}

The buck-converter topology considered in this work is shown in Fig.~\ref{FIG:1}. A semiconductor switch and a freewheeling diode connect the DC-link supply to an output network consisting of a series inductor and a capacitor connected in parallel with the plasma arc. The plasma discharge constitutes the nonlinear and time-varying converter load. 
\begin{figure}[h]
\centering
\includegraphics[width=.99\columnwidth]{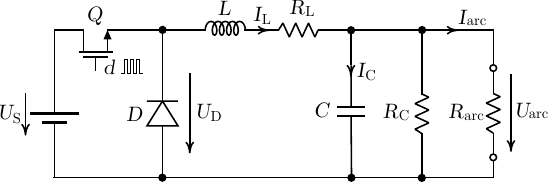}
\caption{Schematic of the buck-converter power stage supplying the plasma arc load.}
\label{FIG:1}
\end{figure}
The proposed controller tracks an inductor-current reference and limits the
discharge current during contact-opening ignition. The relevant converter-side
uncertainties are the inductance $L$ and its effective series resistance
$R_L$, which may vary due to component tolerances, temperature-dependent
losses, magnetic operating conditions, and aging. These parameters are
adapted online by the equations introduced in
Section~\ref{sec:current_control}. The measured feedback quantities are the
inductor current $I_L$ and the arc voltage $U_{\mathrm{arc}}$.
Because the inductor supplies the complete output network, $I_L$ differs
from the arc current $I_{\mathrm{arc}}$ by the current flowing into the
capacitor and parallel leakage branch.\footnote{According to the output-node
current balance, the inductor current is shared between the plasma arc, the
output capacitor, and the parallel leakage path,
\(
I_L = I_{\mathrm{arc}} + I_C + I_{R_C}.
\)
If the capacitor and leakage currents are small compared with $I_L$, then
$I_{\mathrm{arc}} \approx I_L$. During rapid arc-voltage transients, this
approximation may no longer hold; the full current balance is therefore
retained in Sec.~\ref{subsec:averaged_model}.}

\subsection{Averaged Buck Converter Model}
\label{subsec:averaged_model}

The buck converter in Fig.~\ref{FIG:1} operates in continuous conduction mode
(CCM), i.e., the inductor current remains positive throughout the switching
period. The CCM assumption is consistent with the investigated arc operating
conditions, for which a nonzero current reference is maintained and the series
inductance limits current ripple and rapid current decay.

The switched dynamics are replaced by their average over one PWM period following standard averaging theory \cite{khalil2002,sanders1991}, neglecting switching ripple, dead-time effects, and switching losses. When the semiconductor conducts, the inductor is connected to the DC-link voltage $U_S$; when it blocks, the current freewheels through the diode.
Averaging the two configurations over one switching period introduces the duty cycle $d(t)\in[0,1]$ as the primary control input and yields the averaged converter input voltage
\begin{equation}
U_D(t) = U_S\, d(t).
\label{eq:UD_avg}
\end{equation}

\paragraph{Inductor dynamics.}
Applying Kirchhoff's voltage law to the inductor branch in Fig.~\ref{FIG:1} gives 
\begin{equation}
U_D = L\,\dot I_L + R_L I_L + U_{\mathrm{arc}}.
\label{eq:UD}
\end{equation}
This relation serves as the reduced plant model used for the adaptive current-controller design in Section~\ref{sec:current_control}: the duty cycle acts on $U_D$, while $U_{\mathrm{arc}}$ enters as a measurable disturbance that is compensated by feedforward.

\paragraph{Voltage dynamics.}
Applying Kirchhoff's current law at the converter output node gives
\begin{equation}
I_L
=
C\,\dot U_{\mathrm{arc}}
+
\frac{U_{\mathrm{arc}}}{R_C}
+
I_{\mathrm{arc}},
\label{eq:IL_arc}
\end{equation}
where the inductor current is divided between the output capacitor,
the parallel leakage resistance, and the plasma discharge. Using the effective arc resistance introduced in
\eqref{eq:andrea_Rarc}, the discharge current satisfies
\begin{equation}
I_{\mathrm{arc}}
=
\frac{U_{\mathrm{arc}}}
{R_{\mathrm{arc}}(I_{\mathrm{arc}},\ell_{\mathrm{arc}})}.
\label{eq:Iarc_Rarc}
\end{equation}
\paragraph{Use of the model for simulation and control.}
The static discharge model in Subsection~\ref{subsec:arc_circuit} and the output-node balance \eqref{eq:IL_arc} are used for physical interpretation, offline simulation, and setpoint planning. For real-time current control, the nonlinear arc resistance is not evaluated explicitly. Instead, the controller is derived directly from the inductor dynamics \eqref{eq:UD} and uses the measured $U_{\mathrm{arc}}$ as a feedforward signal. The uncertain converter parameters $L$ and $R_L$ are estimated by the adaptive current controller developed in Section~\ref{sec:current_control}. Consequently, only \eqref{eq:UD_avg} and \eqref{eq:UD} are required for the controller design and stability analysis.

\subsection{Adaptive current-control architecture}
\label{subsec:control_architecture}

The proposed adaptive current-control architecture is shown in
Fig.~\ref{FIG:2}. A supervisory layer (Level~2) coordinates process-level
functions, including contact detection, ignition sequencing, reference
generation, protection logic, camera-based monitoring, and control of the
electrode-positioning axis. The detailed design of these supervisory functions
is outside the scope of this work. The generated current reference is shaped by
magnitude and rate limiters before being passed to the adaptive current
controller (Level~1) as $I_{L,\mathrm{ref}}$ and
$\dot I_{L,\mathrm{ref}}$.

The inner current-control layer combines current-error feedback with
feedforward compensation of the measured $U_{\mathrm{arc}}$ and online
adaptation of the uncertain converter parameters $L$ and $R_L$. The nonlinear
static arc model \eqref{eq:andrea_static}--\eqref{eq:andrea_Rarc} is therefore
not evaluated online. Instead, the measured arc voltage enters the control law
directly, while the resulting converter-voltage command is translated into the
duty cycle $d(t)$ by the PWM modulator. The detailed control and adaptation
laws are developed in Section~\ref{sec:current_control}.

The shaped current reference and the duty cycle are constrained according to $d_{\max}\le 1$ and 
\begin{equation}
I_{L,\mathrm{ref}}(t)\in[I_{L,\min},\,I_{L,\max}],
\quad
d(t)\in[0,d_{\max} ],
\label{eq:Iref_bounds}
\end{equation}
which, by \eqref{eq:UD_avg}, implies $U_D\in[0,U_S d_{\max}]$.  In addition, the
reference-shaping stage limits the rate of change of $I_{L,\mathrm{ref}}$,
so that both $I_{L,\mathrm{ref}}$ and $\dot I_{L,\mathrm{ref}}$ remain
bounded. These bounds reflect the physical operating limits of the converter
and process and motivate the assumptions used in the idealized stability analysis
in Section~\ref{sec:current_control}.

\begin{figure*}
  \centering
  \includegraphics[width=1.0\textwidth]
  {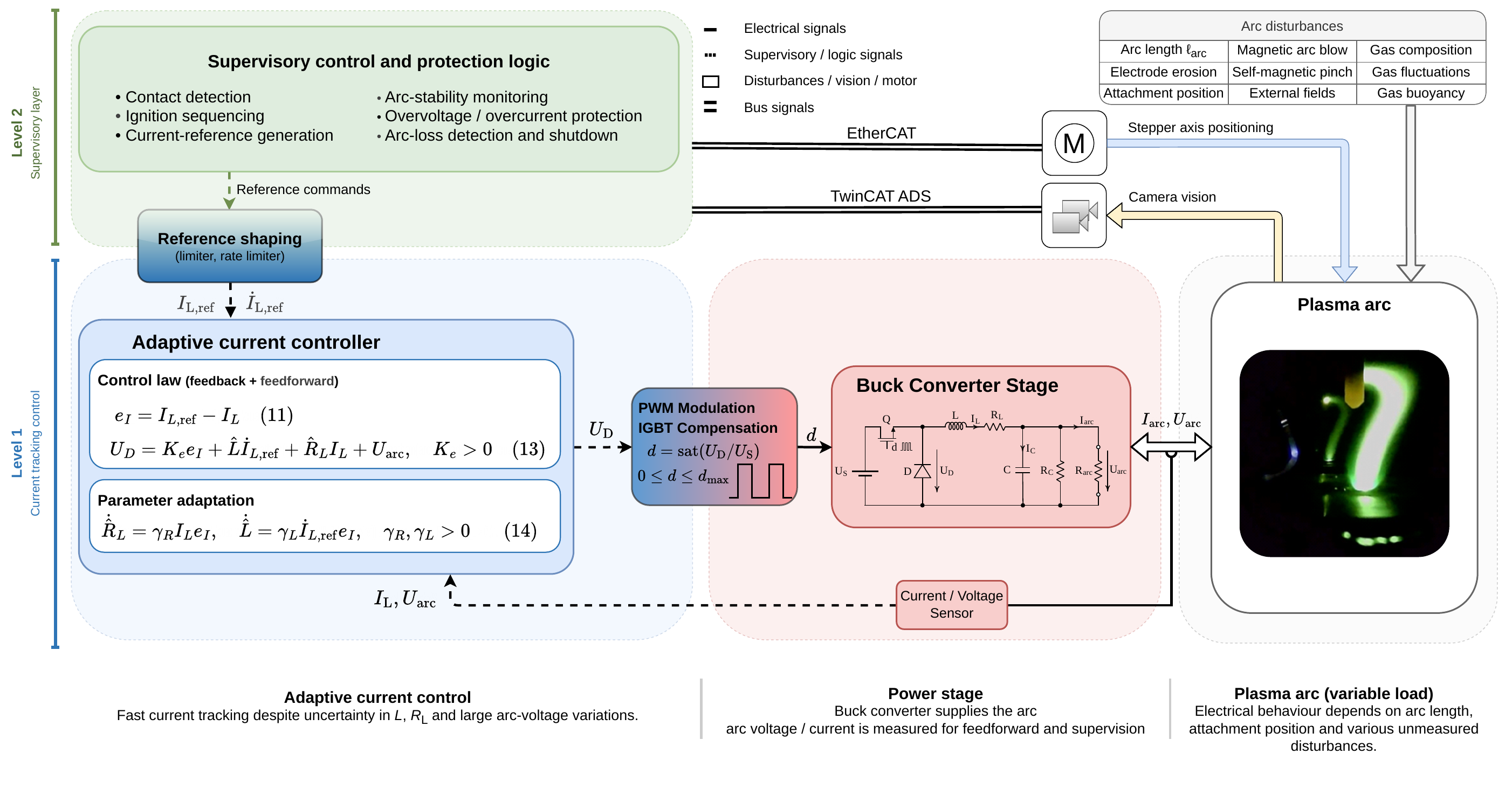}
  \caption{Adaptive current-control architecture for the buck-converter plasma
  arc power supply. The supervisory layer (Level~2) coordinates process-level
  functions and generates the current reference, while the fast current-control
  layer (Level~1) regulates the inductor current using the measured
  $I_L$ and $U_{\mathrm{arc}}$.}
  \label{FIG:2}
\end{figure*}

\section{Adaptive Current Tracking Controller}
\label{sec:current_control}

The adaptive current controller is designed to ensure accurate tracking of the
inductor current reference $I_{L,\mathrm{ref}}(t)$ in the presence of uncertain
converter parameters and time-varying arc conditions. The control law
incorporates online adaptation of the inductance $L$ and its effective series
resistance $R_L$, while the measured arc voltage $U_{\mathrm{arc}}$ is
compensated directly through feedforward.

The stability and convergence properties of the resulting closed-loop current
dynamics are analyzed using a Lyapunov-based approach. It is shown that the
current tracking error converges asymptotically to zero, while all error signals
of the ideal unsaturated averaged closed-loop system remain bounded.

\subsection{Adaptive Current-Control Law}
\label{subsec:adaptive_control_law}

The control objective is to track the bounded and continuously differentiable
inductor-current reference $I_{L,\mathrm{ref}}(t)$ provided by the
reference-shaping stage. As introduced in
Subsection~\ref{subsec:control_architecture}, both
$I_{L,\mathrm{ref}}(t)$ and $\dot I_{L,\mathrm{ref}}(t)$ are bounded.

The current tracking error is defined as
\begin{equation}
e_I(t)
:=
I_{L,\mathrm{ref}}(t)-I_L(t).
\label{eq:current_error}
\end{equation}
Differentiating \eqref{eq:current_error} and using the averaged inductor
dynamics \eqref{eq:UD} yields
\begin{equation}
\dot e_I
=
\dot I_{L,\mathrm{ref}}
-
\frac{1}{L}
\left(
U_D-R_L I_L-U_{\mathrm{arc}}
\right).
\label{eq:error_dynamics_open}
\end{equation}

To compensate the uncertain converter parameters while directly accounting
for the measured arc voltage, the adaptive control law is chosen as
\begin{equation}
U_D
=
K_e e_I
+
\hat L\,\dot I_{L,\mathrm{ref}}
+
\hat R_L I_L
+
U_{\mathrm{arc}},
\qquad K_e>0,
\label{eq:adaptive_control_law}
\end{equation}
where $\hat L$ and $\hat R_L$ denote the online estimates of the inductance
and effective series resistance, respectively. The corresponding adaptation
laws are
\begin{subequations}
\begin{align}
\dot{\hat R}_L
&=
\gamma_R I_L e_I,
\label{eq:R_adaptation}
\\
\dot{\hat L}
&=
\gamma_L \dot I_{L,\mathrm{ref}} e_I,
\label{eq:L_adaptation}
\end{align}
\end{subequations}
with adaptation gains $\gamma_R>0$ and $\gamma_L>0$.

Defining the parameter estimation errors as
\begin{equation}
e_{R_L}:=R_L-\hat R_L,
\qquad
e_L:=L-\hat L,
\label{eq:parameter_errors}
\end{equation}
and substituting \eqref{eq:adaptive_control_law} into
\eqref{eq:error_dynamics_open} gives the closed-loop error dynamics
\begin{subequations}
\begin{align}
\dot e_I
&=
-\frac{K_e}{L}e_I
+
\frac{I_L}{L}e_{R_L}
+
\frac{\dot I_{L,\mathrm{ref}}}{L}e_L,
\label{eq:error_dynamics_current}
\\
\dot e_{R_L}
&=
-\gamma_R I_L e_I,
\label{eq:error_dynamics_resistance}
\\
\dot e_L
&=
-\gamma_L \dot I_{L,\mathrm{ref}}e_I.
\label{eq:error_dynamics_inductance}
\end{align}
\label{eq:error_dynamics}
\end{subequations}

The resulting closed-loop error dynamics depend on the time-varying
current reference and the inductor current $I_L(t)$. Their boundedness
and convergence properties are analyzed in the following subsection.

\subsection{Stability Properties of the Current Loop}
\label{subsec:stability}

The stability properties of the adaptive current loop are analyzed for the
ideal unsaturated averaged closed-loop system. The analysis is based on the
following assumptions.

\begin{assumption}
The converter parameters $L$ and $R_L$ are constant but unknown, with
$L>0$ and $R_L\geq0$.
\end{assumption}

\begin{assumption}
The reference $I_{L,\mathrm{ref}}(t)$ is continuously differentiable, and
both $I_{L,\mathrm{ref}}(t)$ and $\dot I_{L,\mathrm{ref}}(t)$ are bounded.
\end{assumption}

\begin{assumption}
The averaged converter model \eqref{eq:UD} is valid and the commanded
converter voltage $U_D$ can be applied without actuator saturation. The arc
voltage $U_{\mathrm{arc}}$ used for feedforward compensation is assumed to be
measured without error or delay.
\end{assumption}

Under these assumptions, the following theorem establishes boundedness of the
closed-loop error signals and asymptotic convergence of the current tracking
error. Convergence of the parameter estimates to their true values is not
guaranteed in general and requires additional excitation conditions.
\begin{theorem}

Consider the closed-loop error system
\eqref{eq:error_dynamics} under Assumptions~1--3. Then the following
properties hold:

\begin{enumerate}

\item For any initial condition, the error signals
$e_I(t)$, $e_{R_L}(t)$, and $e_L(t)$ remain bounded for all $t\geq0$.

\item The current tracking error is square integrable and converges
asymptotically to zero, i.e.,
\begin{equation}
\int_0^\infty e_I^2(t)\,\mathrm{d}t < \infty,
\qquad
\lim_{t\to\infty} e_I(t)=0.
\label{eq:tracking_convergence}
\end{equation}

\end{enumerate}

\end{theorem}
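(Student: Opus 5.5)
The plan is the standard Lyapunov--Barbalat argument for gradient-type adaptive laws, using the energy-like function
\begin{equation*}
V(e_I,e_{R_L},e_L)
=
\frac{L}{2}e_I^2
+
\frac{1}{2\gamma_R}e_{R_L}^2
+
\frac{1}{2\gamma_L}e_L^2 .
\end{equation*}
By Assumption~1 ($L>0$) and since $\gamma_R,\gamma_L>0$, $V$ is positive definite and radially unbounded in the error vector.

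First I fix the meaning of \eqref{eq:error_dynamics} as a system. Since $I_L=I_{L,\mathrm{ref}}-e_I$ by \eqref{eq:current_error}, the right-hand side is a polynomial in $(e_I,e_{R_L},e_L)$ with coefficients built from $I_{L,\mathrm{ref}}(t)$ and $\dot I_{L,\mathrm{ref}}(t)$. These coefficients are bounded by Assumption~2 and continuous in $t$ because $I_{L,\mathrm{ref}}$ is $C^1$. A unique local solution therefore exists, and Assumption~3 ensures that \eqref{eq:error_dynamics} actually describes the closed loop. Along solutions, I differentiate $V$ using \eqref{eq:error_dynamics_current}--\eqref{eq:error_dynamics_inductance}:
\begin{equation*}
\dot V
=
-K_e e_I^2 + I_L e_I e_{R_L} + \dot I_{L,\mathrm{ref}} e_I e_L
- I_L e_I e_{R_L} - \dot I_{L,\mathrm{ref}} e_I e_L
=
-K_e e_I^2 \le 0 .
\end{equation*}
The cross terms cancel exactly by the design of the adaptation laws \eqref{eq:R_adaptation}--\eqref{eq:L_adaptation}. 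Hence $V(t)\le V(0)$, so all three errors remain in a compact sublevel set. This excludes finite escape time, so the solution exists on $[0,\infty)$, and property~1 follows.

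For property~2, I integrate $\dot V=-K_e e_I^2$ over $[0,t]$ and use $V\ge 0$:
\begin{equation*}
K_e\int_0^t e_I^2\,\mathrm{d}\tau = V(0)-V(t)\le V(0).
\end{equation*}
Letting $t\to\infty$ gives $e_I\in\mathcal L_2$. To conclude $e_I\to0$, I invoke Barbalat's lemma, which requires $e_I^2$ (or $e_I$) to be uniformly continuous. This step is the only one needing care, since $I_L$ is not a priori bounded. It is bounded, however, because $I_L=I_{L,\mathrm{ref}}-e_I$ with both terms bounded. Then every term on the right-hand side of \eqref{eq:error_dynamics_current} is bounded, so $\dot e_I\in\mathcal L_\infty$. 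Consequently $\frac{\mathrm{d}}{\mathrm{d}t}e_I^2=2e_I\dot e_I$ is bounded and $e_I^2$ is uniformly continuous. Since $\int_0^t e_I^2\,\mathrm{d}\tau$ is nondecreasing and bounded, it has a finite limit, and Barbalat's lemma yields $e_I^2\to0$, i.e.\ $\lim_{t\to\infty}e_I(t)=0$. Nothing in this argument asserts convergence of $e_{R_L}$ or $e_L$, consistent with the remark preceding the theorem.
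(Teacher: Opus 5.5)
Your proposal is correct and follows the paper's proof essentially step for step: the same Lyapunov function, the exact cross-term cancellation giving $\dot V=-K_e e_I^2$, the $\mathcal{L}_2$ bound by integration, and Barbalat's lemma after bounding $I_L=I_{L,\mathrm{ref}}-e_I$ and hence $\dot e_I$. Applying Barbalat to $\int_0^t e_I^2\,\mathrm{d}\tau$ with uniformly continuous $e_I^2$ is equivalent to the paper's application to $V$ with uniformly continuous $\dot V$. Your explicit exclusion of finite escape time is a small gain in rigor that the paper leaves implicit.
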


\begin{proof}
Consider the Lyapunov candidate
\begin{equation}
\label{eq:V_candidate}
V
=
\frac{1}{2}L e_I^2
+
\frac{1}{2\gamma_R}e_{R_L}^2
+
\frac{1}{2\gamma_L}e_L^2,
\end{equation}
with $\gamma_R>0$ and $\gamma_L>0$. Since $L>0$, the function $V$
is positive definite and radially unbounded with respect to the error vector
\(
e:=\begin{bmatrix}e_I & e_{R_L} & e_L\end{bmatrix}^{\top}.
\)

Differentiating $V$ along the trajectories of the closed-loop error system
\eqref{eq:error_dynamics} gives
\begin{align}
\dot V
&=
L e_I\dot e_I
+
\frac{1}{\gamma_R}e_{R_L}\dot e_{R_L}
+
\frac{1}{\gamma_L}e_L\dot e_L
\nonumber\\
&=
L e_I
\left(
-\frac{K_e}{L}e_I
+
\frac{I_L}{L}e_{R_L}
+
\frac{\dot I_{L,\mathrm{ref}}}{L}e_L
\right)
\nonumber\\
&\quad
-
e_{R_L}I_L e_I
-
e_L\dot I_{L,\mathrm{ref}}e_I
\nonumber\\
&=
-K_e e_I^2
\leq 0.
\label{eq:Vdot_negative}
\end{align}

Hence, $V(t)$ is non-increasing and satisfies
$V(t)\leq V(0)$ for all $t\geq0$. Since $V$ is positive definite and
radially unbounded with respect to the error vector, the error signals
$e_I(t)$, $e_{R_L}(t)$, and $e_L(t)$ remain bounded for all $t\geq0$,
which proves statement~(i).

Furthermore, integration of \eqref{eq:Vdot_negative} over $[0,T]$ yields
\begin{equation}
V(T)-V(0)
=
-\int_0^T K_e e_I^2(t)\,\mathrm{d}t.
\end{equation}
Since $V(T)\geq0$,
\begin{equation}
\int_0^\infty e_I^2(t)\,\mathrm{d}t
\leq
\frac{V(0)}{K_e}
<\infty,
\label{eq:e_square_integrable}
\end{equation}
and hence the tracking error is square integrable.

It remains to establish asymptotic convergence. Since
$I_{L,\mathrm{ref}}$ and $e_I$ are bounded, the relation
\(
I_L=I_{L,\mathrm{ref}}-e_I
\)
implies that $I_L$ is bounded. Together with boundedness of
$e_{R_L}$, $e_L$, and $\dot I_{L,\mathrm{ref}}$, the closed-loop error
dynamics imply that $\dot e_I$ is bounded. Therefore,
\begin{equation}
\ddot V
=
-2K_e e_I\dot e_I
\label{eq:Vddot}
\end{equation}
is bounded, and thus $\dot V$ is uniformly continuous.

Since $V(t)$ is non-increasing and bounded from below, it converges to a
finite limit. Hence, by Barbalat's lemma \citep[Lemma~8.2]{khalil2002},
\(
\lim_{t\to\infty}\dot V(t)=0.
\)
Using \eqref{eq:Vdot_negative} and $K_e>0$ then yields
\(
\lim_{t\to\infty}e_I(t)=0,
\)
which proves statement~(ii).
\end{proof}

The preceding theorem guarantees asymptotic current tracking, but does not
generally imply convergence of the parameter estimates. A stronger result can
be obtained for the practically relevant case of a constant nonzero current
reference, for which the resistance estimate converges to its true value.

\begin{corollary}
\label{cor:RL_convergence}

Assume that the current reference is constant and nonzero,
\begin{equation}
I_{L,\mathrm{ref}}(t)\equiv \bar{I}_{L,\mathrm{ref}}\neq0,
\qquad
\dot I_{L,\mathrm{ref}}(t)\equiv0.
\end{equation}
Under the conditions of the preceding theorem, the resistance estimation error
converges asymptotically to zero, and hence
\begin{equation}
\lim_{t\to\infty}e_{R_L}(t)=0,
\qquad
\lim_{t\to\infty}\hat R_L(t)=R_L.
\end{equation}
Moreover, since $\dot I_{L,\mathrm{ref}}=0$, the inductance adaptation law
satisfies $
\dot{\hat L}=0,$
so that $\hat L$ remains constant during operation at a constant current
reference.\footnote{Convergence of the inductance estimate to its true value
is not addressed in the present work. In principle, parameter convergence can
be obtained under additional excitation conditions, such as persistent
excitation. This would require sufficiently persistent excitation of $I_L$ and $\dot I_{L,\mathrm{ref}}$. The design of such excitation signals is outside
the scope of this work.}

\end{corollary}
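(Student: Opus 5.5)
With $\dot I_{L,\mathrm{ref}}\equiv0$, the inductance adaptation law \eqref{eq:L_adaptation} gives $\dot{\hat L}=0$ immediately. Hence $e_L$ is constant and drops out of \eqref{eq:error_dynamics_current}, which reduces to
\begin{equation*}
\dot e_I=-\frac{K_e}{L}e_I+\frac{I_L}{L}e_{R_L},
\qquad
\dot e_{R_L}=-\gamma_R I_L e_I .
\end{equation*}
The preceding theorem already provides boundedness of $e_I$, $e_{R_L}$, and $I_L$, together with $e_I\to0$. The plan is to apply Barbalat's lemma a second time, now to $\dot e_I$, and use the fact that $I_L\to\bar I_{L,\mathrm{ref}}\neq0$ to extract $e_{R_L}\to0$.

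\textbf{Step 1: $e_{R_L}$ has a limit.} By \eqref{eq:Vdot_negative}, $V$ is nonincreasing and converges. Since $e_I\to0$ and $e_L$ is constant, the term $\tfrac{1}{2\gamma_R}e_{R_L}^2$ converges. Hence $|e_{R_L}(t)|\to c$ for some $c\ge0$. Because $e_{R_L}$ is continuous and its modulus converges, $e_{R_L}$ itself has a limit: if $c>0$, the signal eventually cannot change sign. So $e_{R_L}(t)\to e_\infty$ with $|e_\infty|=c$.

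\textbf{Step 2: $\dot e_I\to0$.} Write $e_I(t)=e_I(0)+\int_0^t\dot e_I\,\mathrm{d}s$. Since $e_I$ has the finite limit $0$, the integral $\int_0^\infty\dot e_I\,\mathrm{d}s$ converges. To invoke Barbalat's lemma \citep[Lemma~8.2]{khalil2002}, $\dot e_I$ must be uniformly continuous, so I will show $\ddot e_I$ is bounded. Differentiating the reduced dynamics gives
\begin{equation*}
\ddot e_I=-\frac{K_e}{L}\dot e_I+\frac{\dot I_L}{L}e_{R_L}+\frac{I_L}{L}\dot e_{R_L}.
\end{equation*}
Each term is bounded: $\dot I_L=-\dot e_I$ is bounded by the proof of the theorem, and $\dot e_{R_L}=-\gamma_R I_Le_I$ is bounded. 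Barbalat's lemma then yields $\dot e_I\to0$.

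\textbf{Step 3: conclude.} From $\dot e_I\to0$ and $e_I\to0$, the reduced dynamics give $I_L e_{R_L}\to0$. Since $I_L=\bar I_{L,\mathrm{ref}}-e_I\to\bar I_{L,\mathrm{ref}}\neq0$, the factor $I_L$ is bounded away from zero for large $t$. It follows that $e_{R_L}\to0$, and therefore $\hat R_L\to R_L$. Step~1 is in fact not needed for this argument, since Step~3 gives the limit directly; it serves only as a consistency check.

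\textbf{Main obstacle.} The delicate point is the uniform-continuity verification in Step~2. It rests on boundedness of $\dot I_L$, which holds here because $\dot I_{L,\mathrm{ref}}=0$ makes $\dot I_L=-\dot e_I$. The nonzero reference is essential in Step~3: without it, $I_Le_{R_L}\to0$ would not force $e_{R_L}\to0$.
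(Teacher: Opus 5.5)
Your argument is correct, but it takes a different route from the paper. The paper does not build on the theorem's conclusions. It notes that for a constant reference, $I_L=\bar I_{L,\mathrm{ref}}-e_I$ makes the reduced error system autonomous in $(e_I,e_{R_L},e_L)$. It then reuses the same $V$ with $\dot V=-K_e e_I^2$ and applies LaSalle's invariance principle. On $\{e_I=0\}$ one has $\dot e_I=(\bar I_{L,\mathrm{ref}}/L)\,e_{R_L}$, so the largest invariant subset is $\{e_I=0,\ e_{R_L}=0\}$, with $e_L$ arbitrary.

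You instead take $e_I\to0$ and the boundedness results from the theorem and apply Barbalat's lemma a second time, now to $\dot e_I$. This costs you the extra uniform-continuity check via a bound on $\ddot e_I$. You then divide $I_Le_{R_L}\to0$ by $I_L\to\bar I_{L,\mathrm{ref}}\neq0$.

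The LaSalle route is shorter and needs no second-derivative bound. Its invariant set also shows directly that $e_L$ is left unconstrained, i.e., that $\hat L$ is not identified without excitation. However, it relies on autonomy, which is exactly what the constant-reference assumption supplies.

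Your signal-chasing argument never uses autonomy. It therefore carries over almost verbatim to references that only \emph{converge} to a nonzero constant, provided $\dot I_{L,\mathrm{ref}}\to0$ and $\ddot I_{L,\mathrm{ref}}$ is bounded. In that case the extra term $\dot I_{L,\mathrm{ref}}e_L/L$ is bounded in derivative and vanishes asymptotically, so the same division step yields $e_{R_L}\to0$.

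Two small remarks. Step~1 is indeed superfluous. Also, $e_L$ drops out of the $\dot e_I$ equation because its coefficient $\dot I_{L,\mathrm{ref}}/L$ is zero, not because $e_L$ is constant.
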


\begin{proof}

For $\dot I_{L,\mathrm{ref}}(t)\equiv0$, the closed-loop error dynamics
\eqref{eq:error_dynamics} reduce to
\begin{align*}
\dot e_I
&=
-\frac{K_e}{L}e_I
+
\frac{I_L}{L}e_{R_L},\\
\dot e_{R_L}
&=
-\gamma_R I_L e_I,\\
\dot e_L
&=0.
\end{align*}
Since
\(
I_L=\bar I_{L,\mathrm{ref}}-e_I,
\)
the resulting error system is autonomous. Consider the positive definite and radially unbounded Lyapunov function
\[
V
=
\frac{1}{2}L e_I^2
+
\frac{1}{2\gamma_R}e_{R_L}^2
+
\frac{1}{2\gamma_L}e_L^2
\]
in $(e_I,e_{R_L},e_L)$. Differentiating $V$ along the reduced error
dynamics yields
\[
\dot V
=
-K_e e_I^2
\leq0.
\]
Hence, all trajectories are bounded and approach the largest invariant set
contained in
\[
\mathcal S
:=
\{(e_I,e_{R_L},e_L):\dot V=0\}
=
\{(e_I,e_{R_L},e_L):e_I=0\}.
\]
On $\mathcal S$, $e_I=0$, which implies
$I_L=\bar I_{L,\mathrm{ref}}$. Substituting this into the current-error
dynamics gives
\[
\dot e_I
=
\frac{\bar I_{L,\mathrm{ref}}}{L}\,e_{R_L}.
\]

For a trajectory to remain in $\mathcal S$, it must satisfy
$\dot e_I=0$. Since $\bar I_{L,\mathrm{ref}}\neq0$ and $L>0$,
this requires $e_{R_L}=0$. Hence, the largest invariant subset of
$\mathcal S$ is
\[
\mathcal M
=
\{(e_I,e_{R_L},e_L):e_I=0,\ e_{R_L}=0\},
\]
where $e_L$ may take any constant value.
By LaSalle's invariance principle \citep{khalil2002}, every trajectory
converges to $\mathcal M$. Consequently,
\[
\lim_{t\to\infty}e_I(t)=0,
\qquad
\lim_{t\to\infty}e_{R_L}(t)=0.
\]
Since $e_{R_L}=R_L-\hat R_L$, it follows that
\(
\lim_{t\to\infty}\hat R_L(t)=R_L.
\)
Finally, $\dot I_{L,\mathrm{ref}}=0$ implies from
\eqref{eq:L_adaptation} that
\(
\dot{\hat L}=0,
\)
and therefore $\hat L(t)\equiv\hat L(0)$.

\end{proof}

\section{Experimental Setup and Evaluation}
\label{sec:results}

This section describes the experimental platform and evaluates the proposed
adaptive current controller under plasma arc operation. The experimental
validation comprises two repeatable scenarios: contact-opening ignition and
current-reference tracking during established arc operation. In addition, two
qualitative case observations are presented: a short arc-voltage transient
during continued arc operation and one spontaneous arc extinction.

\subsection{Experimental Setup}

The experimental platform comprises the buck-converter power stage, current and voltage sensors, real-time control hardware, and an arc chamber with vertical electrode positioning. The chamber and positioning assembly are shown in Fig.~\ref{fig:experimental_setup_chamber}.

\paragraph{Power-stage implementation.}

The experimental setup implements the single-phase buck-converter topology
introduced in Subsection~\ref{subsec:power_stage} and shown in
Fig.~\ref{FIG:1}. The electrical ratings and component values of the
experimental converter are summarized in
Table~\ref{tab:experimental_parameters}. An additional snubber network is
connected across the switching stage to limit switching-induced voltage
transients.

\paragraph{Real-time control and data acquisition.}

The real-time control system consists of a Beckhoff industrial PC running
TwinCAT~3.1 and a TI C2000 F28P65 microcontroller connected as an EtherCAT
slave. The industrial PC coordinates the experiment, electrode motion,
reference generation, and data logging, while the microcontroller executes
the current controller with $T_s=125\,\mu\mathrm{s}$ and acquires
$I_L$, $U_{\mathrm{arc}}$, $U_S$, and $U_D$. The platform supports control
and switching frequencies up to approximately $200\,\mathrm{kHz}$, whereas
the employed IGBT limits the PWM frequency to $50\,\mathrm{kHz}$.\footnote{
Higher switching frequencies can be achieved using faster low-voltage
MOSFET or GaN transistors.}
The inductor current $I_L$ is measured using a fluxgate current sensor.
All measured electrical signals and the electrode gap $g$ are logged
synchronously on a common time base.

\begin{table}[t]
\centering
\caption{Converter, controller, and measurement parameters.}
\label{tab:experimental_parameters}
\small
\renewcommand{\arraystretch}{1.08}

\begin{tabular}{@{}p{0.61\columnwidth}p{0.30\columnwidth}@{}}
\toprule
Parameter & Value \\
\midrule

Nominal DC-link voltage $U_S$
& $40\,\mathrm{V}$ \\

Nominal inductance $L$
& $2.5\,\mathrm{mH}$ \\

Capacitance $C$
& $470\,\mathrm{nF}$ \\

Parallel resistance $R_C$
& $47\,\Omega$ \\

Power semiconductor
& Vishay IGBT \newline VS-GT55NA120  \\

PWM switching frequency $f_{\mathrm{PWM}}$
& $50\,\mathrm{kHz}$ \\

Controller sampling time $T_s$
& $125\,\mu\mathrm{s}$ \\

Controller hardware
& TI C2000 F28P65 \\

Supervisory control platform
& Beckhoff IPC, TwinCAT~3.1 \\

Current sensor
& LEM CKSR 50-NP fluxgate sensor \\

ADC resolution
& 16 bit \\

Feedback gain $K_e$
& 4 \\

Adaptation gains $\gamma_R$, $\gamma_L$
& 0.01 / 0.001 \\

Initial estimates $\hat R_L(0)$, $\hat L(0)$
& $500\,\mathrm{m}\Omega$ / $20\,\mathrm{m}\mathrm{H}$\\

\bottomrule
\end{tabular}

\end{table}

\paragraph{Plasma arc test chamber and electrode positioning.}

The aluminum test chamber is equipped with four optical side windows that
provide visual access to the discharge.\footnote{Two synchronized cameras
are arranged along crossed viewing directions, allowing reconstruction of
the spatial arc trajectory and estimation of the arc length
$\ell_{\mathrm{arc}}$. Under the investigated operating conditions, the
reconstructed arc length and the measured arc voltage $U_{\mathrm{arc}}$
exhibited closely correlated trends. The following evaluation therefore
focuses on the directly measured arc voltage; a separate optical analysis is
outside the scope of this paper.}
The lower contact plate is made of structural steel St52 and serves as the
grounded counter electrode. Argon is used as the process gas and can be
supplied through either the upper or lower part of the chamber.
\begin{figure*}[t]
\centering
\includegraphics[width=0.98\textwidth]{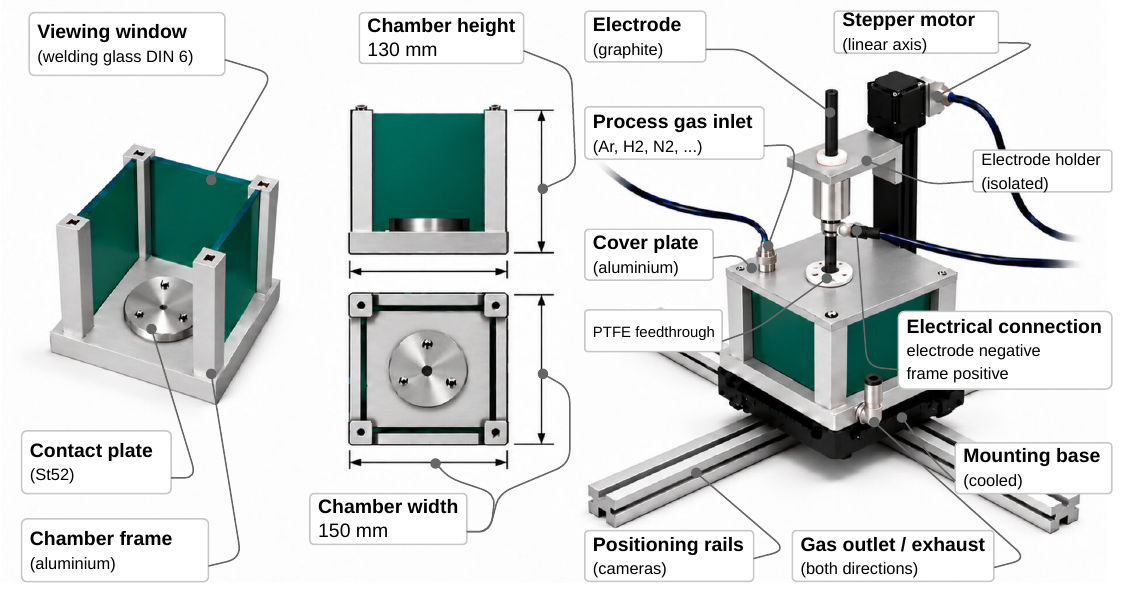}
\caption{Mechanical design of the experimental arc chamber. Left: chamber body with optical windows, contact plate, gas inlet, and principal dimensions. Right: assembled chamber with the vertical electrode-positioning axis, insulated electrode feedthrough, and gas connection.}
\label{fig:experimental_setup_chamber}
\end{figure*}
The graphite electrode is mounted on a vertical linear stage driven by a
stepper motor, whose commanded position defines the geometric electrode gap
$g$. A PTFE feedthrough electrically insulates the moving electrode holder
from the grounded chamber. The experiments use a solid graphite electrode
with a nominal diameter of $8\,\mathrm{mm}$. For contact-opening ignition, the electrode is first
brought into contact with the lower plate and subsequently retracted. A
spring beneath the holder limits the contact force and provides a moderate
contact resistance for local preheating before separation.

\subsection{Experimental Evaluation}

The following experiments evaluate the controller behavior during
contact-opening ignition and current-reference changes. Additional observations illustrate the response to a short arc-voltage
transient and to spontaneous arc extinction.

\subsubsection{Scenario A: Contact-Opening Ignition}

The first experiment evaluates the controller during contact-opening ignition.
This operating condition is particularly demanding because the electrical load
changes abruptly from a low-resistance contact to a plasma arc, accompanied by
a pronounced increase in $U_{\mathrm{arc}}$. The controller must accommodate
this load-voltage transient while maintaining the prescribed current.
Figure~\ref{fig:contact_ignition} shows the synchronized measurements during
the complete ignition sequence. The electrode is first brought into contact
with the lower plate, and a current of $15\,\mathrm{A}$ is established for
preheating. The electrode is then retracted until the conductive contact is
lost and a free-burning arc is formed.

\begin{figure*}[t]
    \centering
    \includegraphics[width=\linewidth]{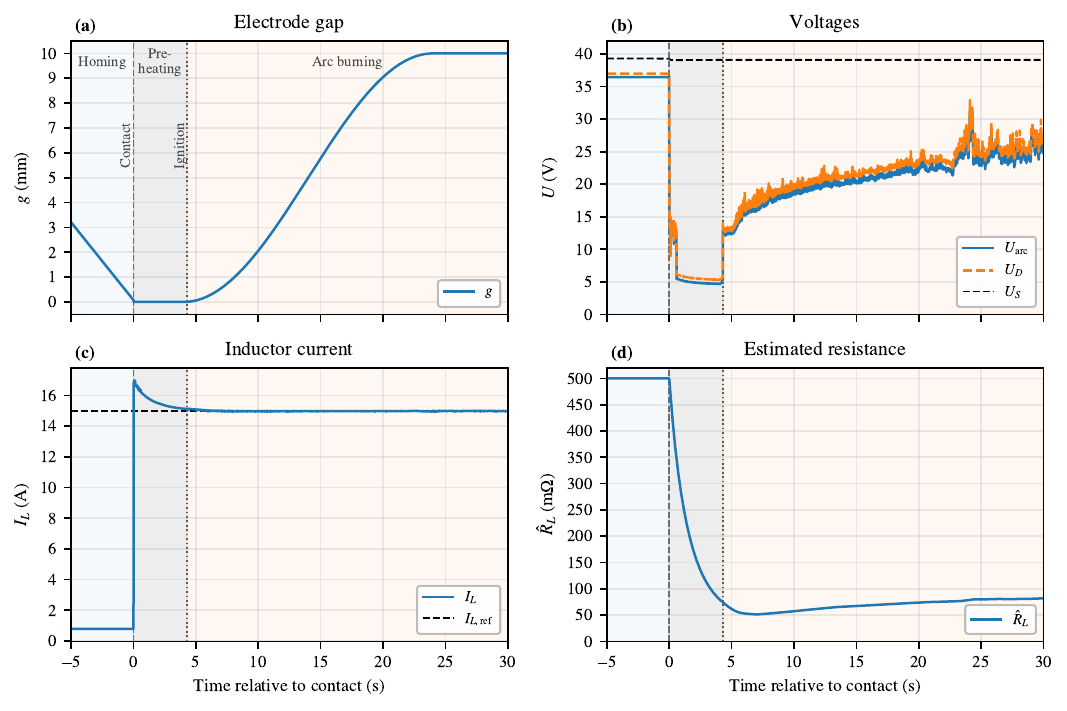}
    \caption{Measured contact-opening ignition sequence. (a) Electrode gap
    $g$ during homing, preheating, and arc operation. (b) Measured arc
    voltage $U_{\mathrm{arc}}$, converter output voltage $U_D$, and DC-link
    voltage $U_S$. (c) Inductor current $I_L$ and its reference
    $I_{L,\mathrm{ref}}=15\,\mathrm{A}$. (d) Online estimate of the
    effective series resistance $\hat R_L$. The dashed and dotted vertical
    lines indicate contact establishment and arc ignition, respectively.
    The shaded regions distinguish the homing, preheating, and arc-burning
    phases.}
    \label{fig:contact_ignition}
\end{figure*}

At the beginning of the preheating phase, $I_L$ exhibits a transient while
the resistance estimate $\hat R_L$ adapts from its initial value. As
$\hat R_L$ approaches its steady value, the current converges to its
reference. During electrode retraction and arc formation,
$U_{\mathrm{arc}}$ increases substantially, whereas $I_L$ remains close to
$15\,\mathrm{A}$. The measured response therefore shows that the arc-voltage
feedforward together with the adaptive current controller maintains current
regulation during the transition from contact conduction to free-burning arc
operation.
Figure~\ref{fig:contact_ignition}(d) shows the corresponding online estimate
$\hat R_L$ of the effective series resistance. The estimate adapts rapidly
during the initial current buildup and subsequently remains close to its
settled value throughout the contact-opening sequence. The inductance estimate
$\hat L$ is not shown, since its adaptation law \eqref{eq:L_adaptation} is
driven by $\dot I_{L,\mathrm{ref}}$ and therefore becomes inactive once the
current reference is constant.
Short fluctuations of $U_{\mathrm{arc}}$ occur during the initial contact
phase and are attributed to intermittent electrical contact associated with
the electrode surfaces and the spring-loaded holder. A further voltage
disturbance is observed at approximately $t=24\,\mathrm{s}$ after the
electrode-positioning motor is switched off. 
The temporary perturbation of the plasma arc is likely caused by a transient change in the external magnetic field associated with the decaying motor current. Despite this effect, \(I_L\) remains close to its reference. Similar disturbances
with varying magnitude were observed in additional experiments.

Overall, the experiment demonstrates successful contact-opening ignition and
current regulation despite the abrupt transition from contact conduction to a
free-burning arc and the associated arc-voltage variations.

\subsubsection{Scenario B: Current-Reference Tracking}

The second experiment evaluates current-reference tracking during established
arc operation. In contrast to the ignition experiment, the current reference
is varied while a stable plasma arc is already present. The reference changes
also activate the inductance adaptation through
$\dot I_{L,\mathrm{ref}}$ in \eqref{eq:L_adaptation}.

Figure~\ref{fig:current_tracking} shows the measured response to a sequence of
current-reference changes. The inductor current $I_L$ follows the prescribed
reference over the investigated operating range. The corresponding variations
in $U_{\mathrm{arc}}$ reflect the nonlinear voltage-current characteristic and external disturbances
of
the plasma arc, while the converter voltage $U_D$ adjusts accordingly.

After the current-reference sequence, the electrode gap is increased from
approximately $10\,\mathrm{mm}$ to $20\,\mathrm{mm}$. This results in an increase in $U_{\mathrm{arc}}$ and consequently in the
required converter voltage, while the current remains regulated. The experiment thus
combines current-reference variations with an additional change in the plasma
load.

The corresponding inductance estimate $\hat L$ is shown in
Fig.~\ref{fig:current_tracking}(d). According to
\eqref{eq:L_adaptation}, the estimate changes during current-reference
transitions and remains constant when $\dot I_{L,\mathrm{ref}}=0$.
Figure~\ref{fig:current_tracking_detailed} provides a detailed view of one
such transition and illustrates the simultaneous current response and update
of $\hat L$. The estimated value should be interpreted as the inductance
parameter of the averaged converter model under the available excitation;
convergence to the physical inductance is not inferred from this experiment.

\begin{figure*}[t]
    \centering

    \includegraphics[width=\linewidth]
    {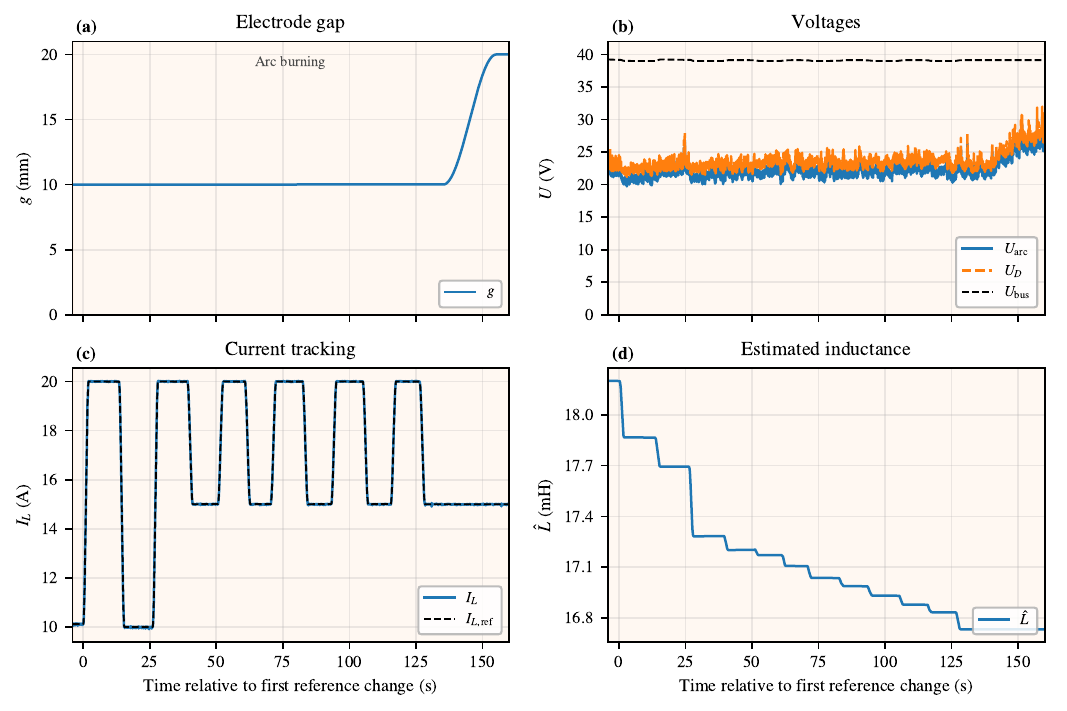}

    \caption{Experimental current-reference tracking during established arc
    operation. (a) Electrode gap $g$. (b) Measured arc voltage
    $U_{\mathrm{arc}}$, converter output voltage $U_D$, and DC-link voltage
    $U_S$. (c) Measured inductor current $I_L$ and its reference
    $I_{L,\mathrm{ref}}$. (d) Online adaptation of the inductance $\hat L$.
    Time is referenced to the beginning of the first current-reference
    transition.}

    \label{fig:current_tracking}

    \vspace{0.8em}

    \includegraphics[width=\linewidth]
    {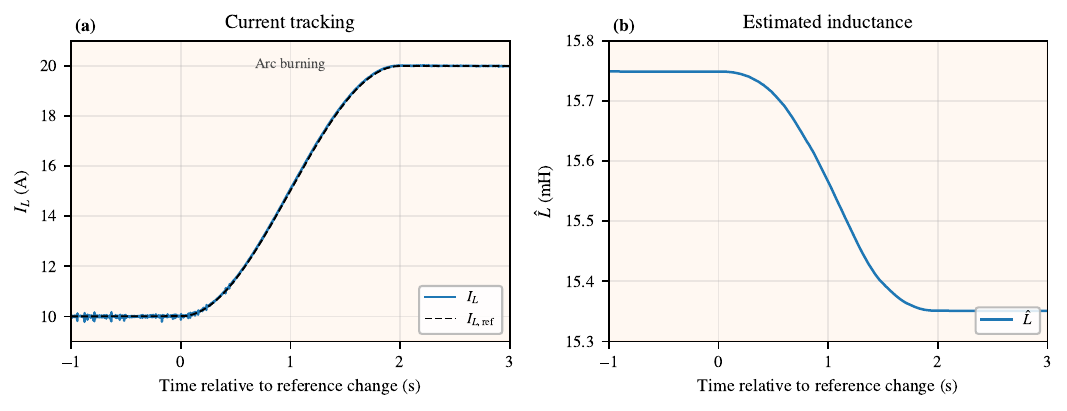}

    \caption{Detailed response to a single current-reference transition during
    established arc operation. (a) Measured inductor current $I_L$ and its
    reference $I_{L,\mathrm{ref}}$. (b) Corresponding online adaptation of the
    inductance $\hat L$. Time is referenced to the beginning of the
    current-reference transition.}

    \label{fig:current_tracking_detailed}
\end{figure*}
The detailed measurements further indicate that the arc exhibits noticeably
larger fluctuations at the lower current level of $10\,\mathrm{A}$ than at
$20\,\mathrm{A}$. The same tendency is observed in the measured arc voltage,
although the corresponding voltage signal is not shown in
Fig.~\ref{fig:current_tracking_detailed}. A possible explanation is the lower
power input at reduced current, resulting in weaker plasma ionization and a
greater sensitivity of the arc to variations in electrode gap and external
disturbances. The observation suggests that the current reference should not
only be selected according to the desired process conditions, but also with
respect to arc stability. This motivates a higher-level operating-point
selection that avoids operating conditions with increased fluctuations and,
potentially, an increased risk of arc extinction.

\subsubsection{Additional Experimental Observations: Arc-Voltage Transient and Arc Extinction}

Figure~\ref{fig:arc_voltage_transient} shows a short arc-voltage transient
recorded during established arc operation. Such transients occur when the
plasma arc relocates and reattaches at a different position on the electrode
surface, causing a rapid change in the effective arc length
$\ell_{\mathrm{arc}}$. The resulting abrupt variation of
$U_{\mathrm{arc}}$ is accompanied by a corresponding change in the required
converter voltage $U_D$, while the inductor current remains close to its
reference of $15\,\mathrm{A}$. The event therefore illustrates that current
regulation is maintained despite a rapid change in arc geometry and arc
voltage.

In a separate experiment, a spontaneous complete arc extinction was observed.
The corresponding measurements are shown in
Fig.~\ref{fig:arc_extinction}. Starting from
$I_{L,\mathrm{ref}}=20\,\mathrm{A}$, the required converter voltage increases
until the power stage reaches its voltage limit of $37\,\mathrm{V}$ at
approximately $t=-10\,\mathrm{ms}$. From this point onward, the converter can
no longer provide the voltage required to maintain the prescribed current and
$I_L$ begins to decrease.

During the subsequent current decay, $U_{\mathrm{arc}}$ continues to increase
and temporarily exceeds the converter output voltage. The magnetic energy
stored in the inductor continues to drive current through the arc even though
the converter is already operating at its voltage limit. As $I_L$ decreases,
the stored magnetic energy is progressively released. At $t=0$, the remaining
energy is no longer sufficient to sustain arc conduction and the arc
extinguishes.

The observation also highlights the influence of the operating current on the
ability to maintain arc conduction during short high-voltage transients.
Since the magnetic energy stored in the inductor is proportional to
$L I_L^2$, a higher current operating point provides a larger energetic
reserve. If the converter reaches its voltage limit, this additional stored
energy can sustain the arc current for a longer period while $I_L$ decays.
Higher-current operating points can therefore improve the capability to bridge
short transient increases in arc voltage. A larger inductance or a higher
available converter voltage can further extend the duration for which such
conditions can be tolerated.

\begin{figure*}[t]
    \centering

    \includegraphics[width=\linewidth]
    {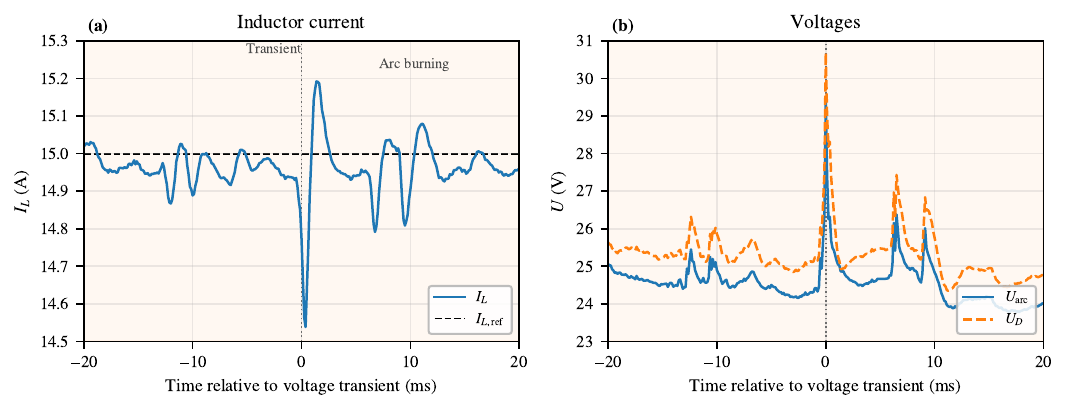}

    \caption{Measured arc-voltage transient during established arc operation.
    (a) Inductor current $I_L$ and its reference
    $I_{L,\mathrm{ref}}=15\,\mathrm{A}$. (b) Measured arc voltage
    $U_{\mathrm{arc}}$ and converter output voltage $U_D$. The transient is
    associated with a rapid relocation of the arc attachment point and the
    resulting change in effective arc length.}

    \label{fig:arc_voltage_transient}

    \vspace{0.8em}

    \includegraphics[width=\linewidth]
    {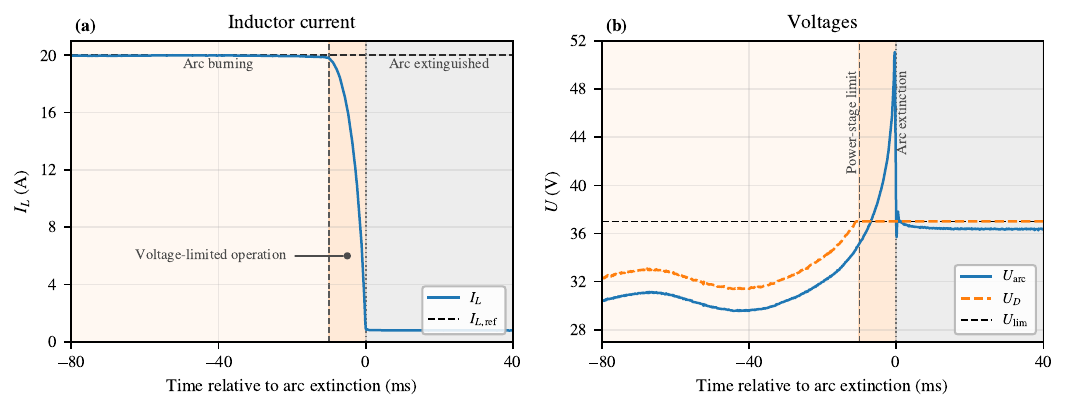}

    \caption{Measured spontaneous arc extinction. (a) Inductor current $I_L$
    and its reference $I_{L,\mathrm{ref}}=20\,\mathrm{A}$. (b) Measured arc
    voltage $U_{\mathrm{arc}}$, converter output voltage $U_D$, and
    power-stage voltage limit $U_{\mathrm{lim}}=37\,\mathrm{V}$. The dashed
    vertical line indicates the onset of voltage limitation at
    $t=-10\,\mathrm{ms}$, and the dotted line at $t=0$ marks arc extinction.}

    \label{fig:arc_extinction}
\end{figure*}

These observations motivate a supervisory operating-point selection that
considers arc stability in addition to the desired process current and the
available converter voltage. The existence of operating-point-dependent
stability fields for HPSR plasma arcs has also been reported by
Zarl et al.~\cite{Zarl2020}. A systematic characterization of such stability
limits for the present setup is left for future work.

\section{Conclusions and Outlook}
\label{sec:conclusion}

An adaptive average current-mode controller has been developed for a buck
converter supplying a nonlinear plasma arc load. The control law combines
current-error feedback with feedforward compensation of the measured arc
voltage and online adaptation of the inductance and its effective series
resistance.

For the ideal unsaturated averaged closed-loop system, a Lyapunov analysis
establishes boundedness of the tracking and parameter-error signals and
asymptotic convergence of the current tracking error. The experimental results
demonstrate successful contact-opening ignition and current-reference tracking
during established arc operation. Current regulation is also maintained during
rapid arc-voltage variations caused by changes in the arc geometry. The
spontaneous arc-extinction experiment further illustrates the practical
limitation imposed by the available converter voltage.

Two extensions of the proposed control architecture are of particular
interest.

\subsection{Multiphase Buck-Converter Extension}

Higher arc currents can be supplied by operating several interleaved
buck-converter phases in parallel. The proposed adaptive controller can in principle be
applied to each phase, while a supervisory layer distributes the total current
reference among the active phases. Future work will address current sharing,
synchronization, phase-dependent parameter adaptation, and operation following
the activation or loss of individual phases.

\subsection{Supervisory Arc-Stability Management}

The experiments indicate that arc stability depends on the selected current
operating point and the available voltage margin. Increased fluctuations were
observed at lower current levels, while arc extinction occurred when the
required converter voltage exceeded the available range. This motivates a
supervisory layer that considers both process requirements and stable arc
operation.

Short current pulses may provide an additional means of stabilizing the
discharge during critical operating conditions. The proposed controller is
well suited for this extension because such pulses can be introduced directly
through the current reference $I_{L,\mathrm{ref}}$. Future work will focus on
the identification of arc-stability boundaries and the experimental evaluation
of suitable stabilization strategies.
\printcredits

\section*{Declaration of generative AI and AI-assisted technologies in the writing process}

During the preparation of this work, the authors used DeepL for translation and linguistic refinement and generative AI tools to improve the readability and language of the manuscript and to refine selected graphical elements of the experimental setup figures. After using these tools, the authors reviewed and edited the content as needed and take full responsibility for the content of the publication.
\bibliographystyle{cas-model2-names}

\bibliography{cas-refs}

@book{khalil2002,
  author    = {Khalil, Hassan K.},
  title     = {Nonlinear Systems},
  edition   = {3},
  publisher = {Prentice Hall},
  year      = {2002}
}

@article{sanders1991,
  author  = {Sanders, Seth R. and Verghese, George C.},
  title   = {Synthesis of Averaged Circuit Models for Switched Power Converters},
  journal = {IEEE Transactions on Circuits and Systems},
  volume  = {38},
  number  = {8},
  pages   = {905--915},
  year    = {1991},
  doi     = {10.1109/31.85632}
}

@book{Gerdeman1972,
  author    = {Gerdeman, D. A. and Hecht, N. L.},
  title     = {Arc Plasma Technology in Materials Science},
  publisher = {Springer},
  address   = {Vienna},
  year      = {1972},
  doi       = {10.1007/978-3-7091-8293-2}
}

@article{Hassan2025,
  author  = {Hassan, Islam and Sayed, Khairy and Alanazi, Meshari D.
             and Ziedan, Hamdy A. and Abdelsattar, Montaser},
  title   = {A Comprehensive Review of Power Converter Topologies
             and Control Techniques for Welding Power Supply Applications},
  journal = {Discover Applied Sciences},
  volume  = {7},
  pages   = {1036},
  year    = {2025},
  doi     = {10.1007/s42452-025-07180-1}
}

@article{Ernst2023,
  author  = {Ernst, Daniel and Zarl, Michael Andreas and
             Farkas, Manuel Andreas and Schenk, Johannes},
  title   = {Effects of the Electrodes' Shape and Graphite Quality
             on the Arc Stability During Hydrogen Plasma Smelting
             Reduction of Iron Ores},
  journal = {Steel Research International},
  volume  = {94},
  pages   = {2200818},
  year    = {2023},
  doi     = {10.1002/srin.202200818}
}

@article{Solanki2015,
  author  = {Solanki, Jayant and Fr{\"o}hleke, Norbert and B{\"o}cker, Joachim},
  title   = {High-Current Variable-Voltage Rectifiers:
             State of the Art Topologies},
  journal = {IET Power Electronics},
  volume  = {8},
  number  = {6},
  pages   = {1068--1080},
  year    = {2015},
  doi     = {10.1049/iet-pel.2014.0533}
}

@article{AlNasser2026,
  author  = {Al-Nasser, Mohamad and Karimi-Sibaki, Ebrahim
             and Alloush, Mhamad Mahdi and Vakhrushev, Alexander
             and Wu, Menghuai and Hackl, Gernot and Ishmurzin, Anton
             and Redl, Christian and Kharicha, Abdellah},
  title   = {Quantitative Impact of Hydrogen on Voltage Characteristics
             and Arc Stability in Industrial {EAFs} and {HPSR}},
  journal = {Results in Engineering},
  volume  = {31},
  pages   = {111480},
  year    = {2026},
  doi     = {10.1016/j.rineng.2026.111480}
}

@article{Nizami2021,
  author  = {Nizami, Tousif Khan and Chakravarty, Arghya and Mahanta, Chitralekha},
  title   = {Time Bound Online Uncertainty Estimation Based Adaptive
             Control Design for {DC--DC} Buck Converters with
             Experimental Validation},
  journal = {IFAC Journal of Systems and Control},
  volume  = {15},
  pages   = {100127},
  year    = {2021},
  doi     = {10.1016/j.ifacsc.2020.100127}
}

@article{Saadat2022,
  author  = {Saadat, Seyyed Amirhossein and
             Ghamari, Seyyed Morteza and
             Mollaee, Hasan},
  title   = {Adaptive Backstepping Controller Design on Buck Converter
             with a Novel Improved Identification Method},
  journal = {IET Control Theory \& Applications},
  volume  = {16},
  pages   = {485--495},
  year    = {2022},
  doi     = {10.1049/cth2.12241}
}

@article{ElFadil2009,
  author  = {El Fadil, Hassan and Giri, Fouad},
  title   = {Robust Nonlinear Adaptive Control of Multiphase
             Synchronous Buck Power Converters},
  journal = {Control Engineering Practice},
  volume  = {17},
  number  = {11},
  pages   = {1245--1254},
  year    = {2009},
  doi     = {10.1016/j.conengprac.2009.04.009}
}

@book{Beilis2020,
  author    = {Beilis, Isak I.},
  title     = {Plasma and Spot Phenomena in Electrical Arcs},
  series    = {Springer Series on Atomic, Optical, and Plasma Physics},
  volume    = {113},
  publisher = {Springer},
  year      = {2020},
  doi       = {10.1007/978-3-030-44747-2}
}

@article{Haraldsson2025,
  author  = {Haraldsson, Hakon Valur and
             Reynolds, Quinn and
             Tesfahunegn, Yonatan A. and
             Saevarsdottir, Gudrun},
  title   = {Modeling of Industrial Electric Arcs Using Different
             Plasma Gas Compositions},
  journal = {Metallurgical and Materials Transactions B},
  volume  = {56},
  pages   = {1208--1217},
  year    = {2025},
  doi     = {10.1007/s11663-024-03397-4}
}

@inproceedings{VoglhuberBrunnmaier2026,
  author    = {Voglhuber-Brunnmaier, Thomas and Lehner, Bernhard and Stanek, Jan and Ecker, Lukas and Reichel, Erich K.},
  title     = {Instrumentierung und Stabilisierung von Lichtbögen zur Plasma-Schmelzreduktion},
  booktitle = {Vorträge},
  year      = {2026},
  pages     = {343--350}
}

@article{Zarl2020,
  author  = {Zarl, Michael Andreas and
             Farkas, Manuel Andreas and
             Schenk, Johannes},
  title   = {A Study on the Stability Fields of Arc Plasma
             in the {HPSR} Process},
  journal = {Metals},
  volume  = {10},
  number  = {10},
  pages   = {1394},
  year    = {2020},
  doi     = {10.3390/met10101394}
}

@article{Zhang2025Interleaved,
  author  = {Zhang, Xiuyun and Zhang, Guidong and Yu, Samson S.},
  title   = {Review of Control Techniques for Interleaved Buck Converters:
             Control Strategies, Efficiency Optimization and Phase Shedding},
  journal = {Chinese Journal of Electrical Engineering},
  volume  = {11},
  number  = {1},
  pages   = {40--58},
  year    = {2025},
  doi     = {10.23919/CJEE.2025.000108}
}

@article{Li2011CurrentSharing,
  author  = {Li, Ming and Tse, Chi K. and Ma, Xikui},
  title   = {Calculation of Steady-State Solution of Parallel-Connected
             Buck Converters with Active Current Sharing and Its
             Parameter Sensitivity},
  journal = {International Journal of Circuit Theory and Applications},
  volume  = {39},
  number  = {3},
  pages   = {275--297},
  year    = {2011},
  doi     = {10.1002/cta.633}
}

@article{Sinafar2023,
  author  = {Sinafar, Behzad and Badamchizadeh, Mohammad Ali and
             Kharrati, Hamed and Baradarannia, Mahdi},
  title   = {Current Sharing and Voltage Regulation of Parallel
             {DC--DC} Buck Converters: Switching Control Approach},
  journal = {ISA Transactions},
  volume  = {140},
  pages   = {490--502},
  year    = {2023},
  doi     = {10.1016/j.isatra.2023.05.022}
}

@inproceedings{andrea2015arc,
  author    = {Andrea, Jonathan and Besdel, Patrick and
               Zirn, Olivier and Bournat, Marc},
  title     = {The Electric Arc as a Circuit Component},
  booktitle = {IECON 2015 -- 41st Annual Conference of the
               IEEE Industrial Electronics Society},
  address   = {Yokohama, Japan},
  pages     = {3027--3034},
  year      = {2015},
  publisher = {IEEE},
  doi       = {10.1109/IECON.2015.7392564}
}

@article{warrington1931reactance,
  author  = {Warrington, A. R. van C.},
  title   = {Reactance Relays Negligibly Affected by Arc Impedance},
  journal = {Electrical World},
  volume  = {98},
  number  = {12},
  pages   = {502--505},
  year    = {1931}
}

@book{warrington1968protective,
  author    = {Warrington, A. R. van C.},
  title     = {Protective Relays: Their Theory and Practice},
  edition   = {2},
  volume    = {1},
  publisher = {Chapman and Hall},
  address   = {London},
  year      = {1968}
}





\end{document}